\documentclass[a4paper,twoside,12pt]{article}
\usepackage[a4paper,total={15cm,23.7cm},centering,nohead,ignorefoot,footskip=28pt]{geometry}
\usepackage{amsmath,amssymb,amsthm,array,booktabs,cite,graphicx,pstricks}
\usepackage[all]{xy}

\newcommand{\CL}{\mathsf{CL}}
\newcommand{\IL}{\mathsf{IL}}
\newcommand{\ML}{\mathsf{ML}}
\newcommand{\IT}{\mathsf{IT}}
\newcommand{\NF}{\mathsf{NF}}

\newcommand{\F}{F}

\newcommand{\GText}{G}
\newcommand{\G}[1]{#1^\GText}
\newcommand{\KoText}{\mathit{Ko}}
\newcommand{\Ko}[1]{#1^\KoText}
\newcommand{\KrText}{\mathit{Kr}}
\newcommand{\KrUp}[1]{#1^\KrText}
\newcommand{\KrDown}[1]{#1_\KrText}
\newcommand{\KuText}{\mathit{Ku}}
\newcommand{\KuUp}[1]{#1^\KuText}
\newcommand{\KuDown}[1]{#1_\KuText}

\newcommand{\GN}[1]{#1^{\mathit{GN}}}
\newcommand{\M}[1]{#1^M}
\newcommand{\MText}{M}
\newcommand{\N}[1]{#1^N}
\newcommand{\NText}{N}
\newcommand{\NOneText}{N_1}
\newcommand{\NOne}[1]{#1^{\NOneText}}
\newcommand{\NOneF}[1]{#1^{\NOneText(\F)}}
\newcommand{\NTwoText}{N_2}
\newcommand{\NTwo}[1]{#1^{\NTwoText}}
\newcommand{\NTwoF}[1]{#1^{\NTwoText(\F)}}

\newcommand{\FDText}{\mathit{FD}}
\newcommand{\FD}[1]{#1^\FDText}
\newcommand{\rFDText}{\mathit{FD}'}
\newcommand{\rFD}[1]{#1^{\rFDText}}
\newcommand{\GFDText}{\mathit{GFD}}
\newcommand{\GFD}[1]{#1^\GFDText}

\newcommand{\defEq}{\mathrel{\mathop:}=}
\newcommand{\defEquiv}{\mathrel{\mathop:}\equiv}

\newcommand{\compactVdotsBot}{\dot{\dot{\dot{\bot}}}}
\newcommand{\RAA}{\frac{\substack{\neg A \\ \compactVdotsBot}}{A}}

\newcolumntype{C}{>{\centering\raggedright\arraybackslash}c}
\newcolumntype{M}[1]{>{\centering\raggedright\arraybackslash}m{#1}}

\newcommand{\node}{*-[]{\bullet}}
\newcommand{\phantomNode}{*-[]{\phantom{\bullet}}}
\newcommand{\edge}[1]{\ar@{-}[#1]}
\newcommand{\edgeLabelUp}[2]{\ar@{-}[#1]^{\mbox{#2}}}
\newcommand{\edgeLabelDown}[2]{\ar@{-}[#1]_{\mbox{#2}}}
\newcommand{\edgeTipDown}[2]{\ar@{|-|}@<-1.75ex>[#1]_{\mbox{#2}}}
\newcommand{\edgeTipUp}[2]{\ar@{|-|}[#1]^{\mbox{#2}}}
\newcommand{\labelUp}[1]{\ar@{}^{\mbox{#1}}}
\newcommand{\labelUpStackTwo}[2]{\ar@{}^{\substack{\mbox{#1} \\ \mbox{#2}}}}
\newcommand{\labelDownStackTwo}[2]{\ar@{}_{\substack{\mbox{#1} \\ \mbox{#2}}}}
\newcommand{\labelUpStackThreeA}[3]{\ar@{}^{\substack{\mbox{#1} \\\\ \mbox{#2} \\ \mbox{#3}}}}
\newcommand{\labelUpStackThreeB}[3]{\ar@{}^{\substack{\mbox{#1} \\ \mbox{#2} \\\\ \mbox{#3}}}}
\newcommand{\labelUpStackFour}[4]{\ar@{}^{\substack{\mbox{#1} \\ \mbox{#2} \\\\ \mbox{#3} \\ \mbox{#4}}}}
\newcommand{\labelDown}[1]{\ar@{}_{\mbox{#1}}}
\newcommand{\labelLeft}[1]{\ar@{}^{\mbox{#1}}}
\newcommand{\labelRight}[1]{\ar@{}_{\mbox{#1}}}

\newlength{\myLabelWidth} 
\newenvironment{myDescription}[1]
{\begin{list}{}%
 {\settowidth{\myLabelWidth}{#1}
  \setlength{\leftmargin}{\myLabelWidth}%
  \addtolength{\leftmargin}{\labelsep}
  \setlength{\labelwidth}{\myLabelWidth}}}
{\end{list}}

\theoremstyle{plain}
  \newtheorem{theorem}{Theorem}
  \newtheorem{proposition}[theorem]{Proposition}
  \newtheorem{lemma}[theorem]{Lemma}
\theoremstyle{definition}
  \newtheorem{definition}[theorem]{Definition}
\theoremstyle{remark}
  \newtheorem{remark}[theorem]{Remark}

\begin{document}

\title{Negative translations not intuitionistically equivalent to the usual ones\footnote{Keywords: negative translation, classical logic, intuitionistic logic, minimal logic, negative fragment.\newline 2000 Mathematics Subject Classification: 03F25.}}
\author{Jaime Gaspar\footnote{Arbeitsgruppe Logik, Fachbereich Mathematik, Technische Universit\"at Darmstadt. Schlossgartenstrasse 7, 64289 Darmstadt, Germany. \texttt{mail@jaimegaspar.com}, \texttt{www.jaimegaspar.com}.\newline
I'm grateful to Hajime Ishihara, Ulrich Kohlenbach and Benno van den Berg. This work was financially supported by the Portuguese Funda\c c\~ao para a Ci\^encia e a Tecnologia, grant SFRH/BD/36358/2007.}}
\date{19 March 2011}
\maketitle

\begin{abstract}
  We refute the conjecture that all negative translations are intuitionistically equivalent by giving two counterexamples. Then we characterise the negative translations intuitionistically equivalent to the usual ones.
\end{abstract}

\section{Introduction}

Informally speaking, classical logic $\CL$ is the usual logic in mathematics, and intuitionistic logic $\IL$ is obtained from classical logic by omitting:
\begin{itemize}
  \item reductio ad absurdum $\RAA$;
  \item law of excluded middle $A \vee \neg A$;
  \item law of double negation $\neg\neg A \to A$.
\end{itemize}
In this sense, $\IL$ is a weakening of $\CL$, that is $\IL$ proves less theorems than $\CL$.

At first sight it seems that $\IL$ is just poorer than $\CL$. However, there is a gain in moving from $\CL$ to $\IL$: the theorems of $\IL$ have nicer properties. The main properties gained are
\begin{itemize}
  \item disjunction property: if $\IL \vdash A \vee B$, then $\IL \vdash A$ or $\IL \vdash B$\\
  (where $A$ and $B$ are sentences);
  \item existence property: if $\IL \vdash \exists x A(x)$, then $\IL \vdash A(t)$ for some term $t$\\
  (where $\exists x A$ is a sentence).
\end{itemize}
Arguably, these two properties are the key criteria to say that a logic is constructive.

On the one hand $\IL$ is weaker than $\CL$, on the other hand $\IL$ is constructive while $\CL$ is not. Given these differences, it is surprising that $\CL$ can be faithfully embedded in $\IL$ by the so-called negative translations into $\IL$. Negative translations into $\IL$ are functions $\NText$, mapping a formula $A$ to a formula $\N A$, that:
\begin{itemize}
  \item embed $\CL$ into $\IL$, that is $\CL \vdash A \ \Rightarrow \ \IL \vdash \N A$;
  \item are faithful, that is $\CL \vdash \N A \leftrightarrow A$.
\end{itemize}
The image of the usual negative translations is (essentially) the negative fragment $\NF$, that is the set of all formulas without $\vee$ and $\exists$ and whose atomic formulas are all negated. So $\NF$ is a faithful copy of $\CL$ inside $\IL$. This is pictured in figure \ref{figure:negativeTranslation}.
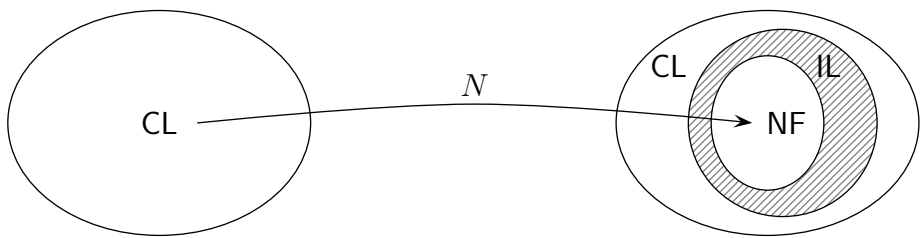
\begin{figure}[h]
  \begin{center}
    \begin{pspicture}(12cm,3cm)
      \psset{linewidth=0.5pt}
      \psellipse(2,1.5)(2,1.5) \rput(2,1.5){$\CL$}
      \psellipse(10,1.5)(2,1.5) \rput(8.7,2.25){$\CL$}
      \pscircle[fillstyle=hlines,hatchcolor=gray,hatchsep=1.5pt,hatchwidth=0.5pt](10.2,1.5){1.25} \rput(10.8,2.25){$\IL$}
      \psellipse[fillstyle=solid](10,1.5)(0.75,0.9) \rput(10.25,1.5){$\NF$}
      \pscurve[fillstyle=none]{->,arrowsize=5pt}(2.5,1.5)(6.15,1.75)(9.8,1.5) \rput(6.15,2){$\NText$}
    \end{pspicture}
    \caption{negative translation $\NText$ into $\IL$ embedding $\CL$ in the fragment $\NF$ of $\IL$.}
  \end{center}
  \label{figure:negativeTranslation}
\end{figure}

There are four negative translations into $\IL$ usually found in the literature (and recently two new ones were presented\cite{FerreiraOliva2011}). They are introduced in table \ref{table:introducingTheFour} and defined (by induction on the structure of formulas) in table \ref{table:definingTheFour}. All these negative translations into $\IL$ are equivalent in $\IL$: given any two of them, say $\MText$ and $\NText$, we have $\IL \vdash \M A \leftrightarrow \N A$. This fact leads to the following conjecture that seems to be almost folklore:
\begin{quote}
  if we rigorously define the notion of a negative translation into $\IL$, then we should be able to prove that all negative translations are equivalent in $\IL$.
\end{quote}
Curiously, this conjecture apparently has never been studied before. In this article we study it, reaching the following conclusions.
\begin{itemize}
  \item The conjecture is false and we give two counterexamples.
  \item The usual negative translations into $\IL$ are characterised by the following two equivalent conditions:
  \begin{itemize}
    \item to translate into $\NF$ in $\IL$, that is $\N A$ is equivalent in $\IL$ to a formula in $\NF$;
    \item to act as the identity on $\NF$ in $\IL$, that is $\IL \vdash \N A \leftrightarrow A$ for all $A \in \NF$.
  \end{itemize}
\end{itemize}

\begin{table}
  \begin{center}
    \begin{tabular}{CCCM{7cm}}
      \toprule
      Year & Name & Symbol & Note\\\midrule
      1925 & Kolmogorov\cite{Kolmogorov1925} & $\KoText$ &\\
      1933 & G\"odel-Gentzen & $\GText$ & One variant by G\"odel\cite{Goedel1933} and another one independently by Gentzen\cite{Gentzen1933}\\
      1951 & Kuroda\cite{Kuroda1951} & $\KuText$ &\\
      1998 & Krivine\cite{Krivine1990} & $\KrText$ & Maybe better attributed to Streicher and Reus\cite{StreicherReus1998}\\\bottomrule
    \end{tabular}
    \caption{the four usual negative translations.}
    \label{table:introducingTheFour}
  \end{center}
\end{table}
\begin{table}
  \begin{center}
    \begin{tabular}{r@{${}\defEquiv{}$}lr@{${}\defEquiv{}$}l}
      \toprule\addlinespace
      $\Ko P$              & $\neg\neg P$ \ ($P\not\equiv \bot$ atomic) & $\G P$              & $\neg\neg P$ \ ($P\not\equiv \bot$ atomic)\\
      $\Ko \bot$           & $\bot$                                     & $\G \bot$           & $\bot$\\
      $\Ko{(A \wedge B)}$  & $\neg\neg(\Ko A \wedge \Ko B)$             & $\G{(A \wedge B)}$  & $\G A \wedge \G B$\\
      $\Ko{(A \vee B)}$    & $\neg\neg(\Ko A \vee \Ko B)$               & $\G{(A \vee B)}$    & $\neg(\neg \G A \wedge \neg \G B)$\\
      $\Ko{(A \to B)}$     & $\neg\neg(\Ko A \to \Ko B)$                & $\G{(A \to B)}$     & $\G A \to \G B$\\
      $\Ko{(\forall x A)}$ & $\neg\neg \forall x \Ko A$                 & $\G{(\forall x A)}$ & $\forall x \G A$\\
      $\Ko{(\exists x A)}$ & $\neg\neg \exists x \Ko A$                 & $\G{(\exists x A)}$ & $\neg \forall x \neg \G A$\\
      \midrule\addlinespace
      $\KuUp A$                & $\neg\neg \KuDown A$           & $\KrUp A$                & $\neg \KrDown A$\\
      $\KuDown P$              & $P$ \ ($P$ atomic)             & $\KrDown P$              & $\neg P$ \ ($P$ atomic)\\
      $\KuDown{(A \wedge B)}$  & $\KuDown A \wedge \KuDown B$   & $\KrDown{(A \wedge B)}$  & $\KrDown A \vee \KrDown B$\\
      $\KuDown{(A \vee B)}$    & $\KuDown A \vee \KuDown B$     & $\KrDown{(A \vee B)}$    & $\KrDown A \wedge \KrDown B$\\
      $\KuDown{(A \to B)}$     & $\KuDown A \to \KuDown B$      & $\KrDown{(A \to B)}$     & $\neg \KrDown A \wedge \KrDown B$\\
      $\KuDown{(\forall x A)}$ & $\forall x \neg\neg \KuDown A$ & $\KrDown{(\forall x A)}$ & $\exists x \KrDown A$\\
      $\KuDown{(\exists x A)}$ & $\exists x \KuDown A$          & $\KrDown{(\exists x A)}$ & $\neg \exists x \neg \KrDown A$\\
      \bottomrule
    \end{tabular}
    \caption{definition of the four usual negative translations.}
    \label{table:definingTheFour}
  \end{center}
\end{table}

\section{Notions}

In the rest of this article, $\CL$ denotes the pure first order classical predicate logic based on $\bot$, $\wedge$, $\vee$, $\to$, $\forall$ and $\exists$ (where $\neg A \defEquiv A \to \bot$, $A \leftrightarrow B \defEquiv (A \to B) \wedge (B \to A)$ and $\equiv$ denotes syntactical equality) and $\IL$ and $\ML$ denote its intuitionistic and minimal counterparts, respectively. All formulas considered belong to the common language of $\CL$, $\IL$ and $\ML$. To save parentheses we adopt the convention that $\forall$ and $\exists$ bind stronger than $\wedge$ and $\vee$, which in turn bind stronger than $\to$.

Let us start by motivating our definition of a negative translation.

The main feature of any negative translation $\NText$ into $\IL$ is embedding $\CL$ into $\IL$ in the sense of $\CL \vdash A \ \Rightarrow \ \IL \vdash \N A$. We can be even more ambitious and ask for (1)~$\CL + \Gamma \vdash A \ \Rightarrow \ \IL + \N \Gamma \vdash \N A$ where $\Gamma$ is any set of formulas and $\N \Gamma \defEq \{\N A : A \in \Gamma\}$.

But embedding $\CL$ into $\IL$ alone does not seem to capture our intuitive notion of a negative translation. For example, it includes the trivial example $\N A \defEquiv \neg\bot$. The problem with this example is that the meaning of $\N A$ is unrelated to the meaning of $A$. So require that a negative translation do not change the meaning of formulas, that is (2)~$\N A \leftrightarrow A$. This equivalence must not be taken in $\IL$ or $\ML$, otherwise from (1) and (2) we would get $\CL = \IL$. So we take the equivalence in $\CL$, that is $\CL \vdash A \leftrightarrow \N A$.

\begin{definition}
  Let $\NText$ be a function mapping each formula $A$ to a formula $\N A$.
  \begin{itemize}
    \item The following condition is called \emph{soundness theorem into $\IL$} ($\ML$) \emph {of $\NText$}: for all formulas $A$ and for all sets $\Gamma$ of possibly open formulas, we have the implication $\CL + \Gamma \vdash A \ \Rightarrow \ \IL + \N \Gamma \vdash \N A$ (respectively, $\CL + \Gamma \vdash A \ \Rightarrow \ \ML + \N \Gamma \vdash \N A$).
    \item The following condition is called \emph{characterisation theorem of $\NText$}: for all formulas $A$ we have $\CL \vdash \N A \leftrightarrow A$.
    \item We say that $\NText$ is a \emph{negative translation into $\IL$} ($\ML$) if and only if both the soundness theorem into $\IL$ (respectively, $\ML$) of $\NText$ and the characterisation theorem of $\NText$ hold.
  \end{itemize}
\end{definition}

\begin{remark}
  The soundness theorem into $\ML$ of $\NText$ implies the soundness theorem into $\IL$ of $\NText$. So a negative translation into $\ML$ is in particular a negative translation into $\IL$.
\end{remark}

The conjecture that concerns us mentions equivalence in $\IL$. For definiteness, we write down exactly what we mean by this.

\begin{definition}
  We say that two negative translations $\MText$ and $\NText$ are \emph{equivalent in $\IL$} ($\ML$) if and only if for all formulas $A$ we have $\IL \vdash \M A \leftrightarrow \N A$ (respectively, $\ML \vdash \M A \leftrightarrow \N A$).
\end{definition}

Later on we will see that what characterises the usual negative translations into $\IL$ are two properties related to $\NF$. Again for definiteness we write down the definition of $\NF$ and of the two properties.

\begin{definition}
  \label{definition:negativeTranslation}
  The \emph{negative fragment} $\NF$ is the set of formulas inductively generated by:
  \begin{itemize}
    \item $\bot \in \NF$;
    \item if $P$ is an atomic formula, then $\neg P \in \NF$;
    \item if $A,B \in \NF$, then $A \wedge B,A \to B,\forall x A \in \NF$.
  \end{itemize}
\end{definition}

\begin{definition}
  Let $\NText$ be a negative translation into $\IL$.
  \begin{itemize}
    \item We say that $\NText$ \emph{translates into $\NF$ in $\IL$} ($\ML$) if and only if for all formulas $A$ there exists a $B \in \NF$ such that $\IL \vdash \N A \leftrightarrow B$ (respectively, $\ML \vdash \N A \leftrightarrow B$).
    \item We say that $\NText$ \emph{acts as the identity on $\NF$ in $\IL$} ($\ML$) if and only if for all $A \in \NF$ we have $\IL \vdash \N A \leftrightarrow A$ (respectively, $\ML \vdash \N A \leftrightarrow A$).
  \end{itemize}
\end{definition}

\section{G\"odel-Gentzen negative translation}

We will choose the G\"odel-Gentzen negative translation $\GText$ as a representative of the usual negative translations into $\IL$, so let us take a closer look at it.

We start by motivating the definition of $\GText$. It is known from proof theory that $\CL$ is conservative over $\ML$ with respect to $\NF$, that is (1)~for all $A \in \NF$ we have the implication $\CL \vdash A \ \Rightarrow \ \ML \vdash A$. This suggests us that one way of constructing a negative translation into $\ML$ is to rewrite each formula $A$ as a formula $\N A \in \NF$. By rewriting we mean that $\N A$ still has the same meaning as $A$ in the sense of (2)~$\CL \vdash \N A \leftrightarrow A$. Then (1) would give us the soundness theorem into $\ML$ of $\NText$ (almost, because there is no $\Gamma$) and (2) would give us the characterisation theorem of $\NText$. The natural way of rewriting a formula $A$ as a classically equivalent formula $\N A \in \NF$ (that is having all atomic formulas $P \not\equiv \bot$ negated and using only $\bot$, $\wedge$, $\to$ and $\forall$) is:
\begin{itemize}
  \item rewrite atomic formulas $P \not\equiv \bot$ as $\neg\neg P$;
  \item rewrite $A \vee B$ as $\neg(\neg A \wedge \neg B)$;
  \item rewrite $\exists x A$ as $\neg \forall x \neg A$;
  \item there's no need to rewrite $\bot$, $A \wedge B$, $A \to B$ and $\forall x A$.
\end{itemize}
If we formalise these rewritings as a definition of $\NText$ by induction on the structure of formulas, then we get exactly $\GText$. As a ``tagline'' we can say: $\G A$ is the natural rewriting of $A$ into $\NF$.

Incidentally, G\"odel's and Gentzen's negative translations differ only in the way they translate $A \to B$: G\"odel translates to $\neg(\G A \wedge \neg \G B)$ while Gentzen translates to $\G A \to \G B$. By the above discussion, we find Gentzen's variant more natural and so we adopt it.

Now we turn to the main properties of $\GText$. We can prove that $G$:
\begin{itemize}
  \item is a negative translation into $\ML$;
  \item translates into $\NF$ in $\ML$;
  \item acts as the identity on $\NF$ in $\ML$.
\end{itemize}
We can even prove strengthenings of the second and third properties above:
\begin{itemize}
  \item for all formulas $A$ we have $\G A \in \NF$;
  \item for all formulas $A \in \NF$ we have $\G A \equiv A$\\
  (modulo identifying $\neg\neg\neg P$ with $\neg P$ for atomic formulas $P$).
\end{itemize}
These two strengthenings are specific of $\GText$: they do not hold for $\KoText$, $\KuText$ and $\KrText$.

To finish this section we discuss $\GText$ as a representative of the usual negative translations into $\IL$. We can prove that $\KoText$, $\GText$, $\KuText$ and $\KrText$ are equivalent in $\IL$. So any of them can be taken as a representative of the usual negative translations into $\IL$. We choose to take $\GText$ as a representative due to nice syntactical properties of $\GText$ like $\G{(A \leftrightarrow B)} \equiv \G A \leftrightarrow \G B$ and the two strengthenings above. These properties allow us to work many times with syntactical equalities instead of equivalences, thus avoiding the question of where ($\CL$, $\IL$ or $\ML$) the equivalences are provable.

\section{Two negative translations not intuitionistically equivalent to the usual ones}

Before we present our two counterexamples to the conjecture, let us draw a scale to roughly measure how provable or refutable a formula $\F$ is. This scale will be useful to picture our main theorem about the counterexamples. We draw the scale following this set of instructions.
\begin{itemize}
  \item We plot along an axis all possible pairs of combinations of
  \begin{equation*}
    \CL \vdash \F, \qquad \CL \nvdash \F \text{ and } \CL \nvdash \neg \F, \qquad \CL \vdash \neg \F
  \end{equation*}
  with
  \begin{equation*}
    \IL \vdash \F, \qquad \IL \nvdash \F \text{ and } \IL \nvdash \neg \F, \qquad \IL \vdash \neg \F.
  \end{equation*}
  \item Actually, we do not plot impossible pairs (for example, ``$\CL \vdash \F$ and $\IL \vdash \neg \F$'') and redundant entries in pairs (for example, the entry ``$\CL \vdash \F$'' in the pair ``$\CL \vdash \F$ and $\IL \vdash \F$'').
  \item The plotting is ordered from provability of $\F$ on the left to refutability of $\F$ on the right (for example, ``$\IL \vdash \F$'' stands on the left of ``$\CL \vdash \F$ and $\IL \nvdash F$'' because ``$\IL \vdash \F$'' states a stronger form of provability).
\end{itemize}
The resulting scale is pictured in figure \ref{figure:scale}.
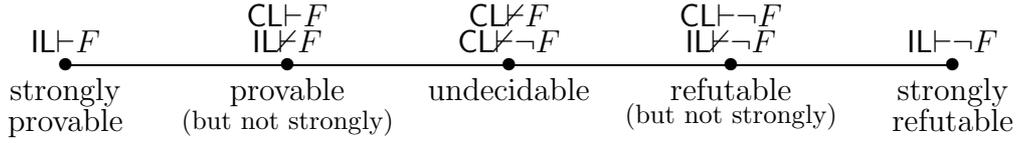
\begin{figure}[h]
  \centerline{
    \xymatrix@C=83pt@L=5pt{
      \node \edge{r} \labelUp{$\IL {\vdash} \F$}\labelDownStackTwo{strongly}{provable} &
      \node \edge{r} \labelUpStackTwo{$\CL {\vdash} \F$}{$\IL {\nvdash} \F$}\labelDownStackTwo{provable}{\footnotesize(but not strongly)} &
      \node \edge{r} \labelUpStackTwo{$\CL {\nvdash} \F$}{$\CL {\nvdash} \neg \F$}\labelDown{undecidable} &
      \node \edge{r} \labelUpStackTwo{$\CL {\vdash} \neg \F$}{$\IL {\nvdash} \neg \F$}\labelDownStackTwo{refutable}{\footnotesize(but not strongly)} &
      \node \labelUp{$\IL {\vdash} \neg \F$}\labelDownStackTwo{strongly}{refutable}
    }
  }
  \caption{scale of provability-refutability.}
  \label{figure:scale}
\end{figure}

Now we present our two counterexamples.
\begin{itemize}
  \item The first counterexample $\NOneText$ is a weakening of $\GText$ obtained by weakening $\G A$ to $\G A \vee F$ (for suitable formulas $\F$).
  \item The second counterexample $\NTwoText$ is a variant of $\GText$ obtained by making $\bot$ in $\G A$ ``less false'' in the sense of replacing $\bot$ by $\F$ in $\G A$, that is $\G A[\F/\bot]$  (again, for suitable $\F$).
\end{itemize}

\begin{definition}
  Fix a formula $\F$. We define two functions $\NOneText$ and $\NTwoText$, mapping formulas to formulas, by
  \begin{itemize}
    \item $\NOne A \defEquiv \G A \vee \F$;
    \item $\NTwo A \defEquiv \G A[\F/\bot]$.
  \end{itemize}
\end{definition}
Since $\NOneText$ and $\NTwoText$ depend on the chosen $\F$, in rigour we should write something like $\NOneF A$ and $\NTwoF A$, but we avoid this cumbersome notation.

We found $\NTwoText$ in an article by Ishihara\cite{Ishihara2000} and in a book chapter by Coquand\cite[section 2.3]{Coquand1997}. Maybe Ishihara drew inspiration from an article by Flagg and Friedman\cite{FlaggFriedman1986} where a similar translation appears. It is even possible that $\NTwoText$ is folklore.

For our two counterexamples to work, we need the formula $\F$ to be classically refutable but intuitionistically acceptable. In the next lemma we prove that there are such formulas $\F$.

\begin{lemma}\mbox{}
  \label{lemma:F}
  \begin{enumerate}
    \item There exists a formula $\F$ such that $\CL \vdash \neg \F$ but $\IL \nvdash \neg \F$.
    \item \label{item:FNotNegative} Any such formula $\F$ is not equivalent in $\IL$ to a formula in $\NF$.
  \end{enumerate}
\end{lemma}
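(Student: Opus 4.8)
For part~(1), the plan is to exhibit a concrete formula that is a classical tautology but not an intuitionistic one. The simplest candidates are instances of the law of excluded middle or double negation elimination. I would take $\F \defEquiv \neg(P \vee \neg P)$ for a fixed atomic formula $P \not\equiv \bot$. Classically $P \vee \neg P$ holds, so $\CL \vdash \neg \F$. Intuitionistically, however, $\neg \F \equiv \neg\neg(P \vee \neg P)$ is \emph{not} provable: one way to see this is a one-node Kripke countermodel, or simply the well-known fact that $\IL \nvdash \neg\neg(P \vee \neg P)$ for atomic $P$ (equivalently, $\IL \nvdash P \vee \neg P$ together with the fact that $\IL \vdash \neg\neg(P\vee\neg P)$ would, by the disjunction-free nature of $\neg\neg(P\vee\neg P)$ under Glivenko, collapse too much). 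Actually the cleanest self-contained argument is a Kripke model with two nodes $w_0 \leq w_1$ where $P$ is false at $w_0$ and true at $w_1$: then $P \vee \neg P$ fails at $w_0$ (neither disjunct is forced), so $\neg(P\vee\neg P)$ also fails at $w_0$ since $w_1$ forces $P\vee\neg P$; hence $\neg\neg(P\vee\neg P)$ fails at $w_0$, witnessing $\IL \nvdash \neg \F$.

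For part~(2), suppose toward a contradiction that some $B \in \NF$ satisfies $\IL \vdash \F \leftrightarrow B$. The key observation is that every formula in $\NF$ is, provably in $\IL$ (indeed already in $\ML$), \emph{negative} in the strong sense that $\IL \vdash B \leftrightarrow \neg\neg B$; this is a routine induction on the generation of $\NF$ from Definition~\ref{definition:negativeTranslation} (the atomic case uses $\IL \vdash \neg P \leftrightarrow \neg\neg\neg P$, and the inductive cases for $\wedge$, $\to$, $\forall$ are standard, using $\IL \vdash (\neg\neg C \wedge \neg\neg D) \leftrightarrow \neg\neg(C \wedge D)$, $\IL \vdash (C \to \neg\neg D) \leftrightarrow \neg\neg(C \to D)$, and $\IL \vdash \forall x \neg\neg C \leftrightarrow \neg\neg \forall x C$, the last needing no extra hypotheses since $\forall$ commutes suitably). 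Granting this, from $\IL \vdash \F \leftrightarrow B$ and $\IL \vdash B \leftrightarrow \neg\neg B$ we get $\IL \vdash \F \leftrightarrow \neg\neg \F$, hence $\IL \vdash \neg\neg \F \to \F$. But $\CL \vdash \neg \F$ by part~(1), so $\IL \vdash \neg\neg \F$ (a classical theorem of the form $\neg\neg(\cdots)$ — more directly, $\CL \vdash \neg\F$ gives $\IL \vdash \neg\neg\neg\F$ by the fact that $\IL$ proves all $\neg\neg$-translations of classical theorems, hence $\IL \vdash \neg\neg\F$... ). Let me instead argue cleanly: $\CL \vdash \neg\F$ means $\CL \vdash \F \to \bot$; by Glivenko's theorem $\IL \vdash \neg\neg(\F \to \bot)$, i.e.\ $\IL \vdash \neg\neg\neg\F$, and since $\IL \vdash \neg\neg\neg\F \leftrightarrow \neg\F$ we would get $\IL \vdash \neg\F$, contradicting part~(1). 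Wait — that last line already contradicts~(1) without using $B$ at all; the role of $B \in \NF$ is precisely what lets us \emph{not} invoke that and instead derive the contradiction as follows: $\IL \vdash \neg\neg\F \to \F$ combined with $\IL \vdash \neg\neg\neg\F$ (Glivenko, as above) does not immediately help, so the right route is: from $\IL \vdash \F \leftrightarrow \neg\neg\F$ we get $\IL \vdash \neg\F \leftrightarrow \neg\neg\neg\F$, which is trivial, so the genuine contradiction must come from pairing $\CL\vdash\neg\F$ with $\IL\vdash\F\leftrightarrow B$ \emph{differently}: since $\CL \vdash \neg\F$ and $\CL \vdash \F \leftrightarrow B$ (as $\IL \vdash \F\leftrightarrow B$), we get $\CL \vdash \neg B$; but $B \in \NF$ and $\CL$ is conservative over $\IL$ (indeed over $\ML$) with respect to $\NF$ — and $\neg B = B \to \bot \in \NF$ too — so $\IL \vdash \neg B$, whence $\IL \vdash \neg \F$, contradicting part~(1).

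The main obstacle is thus not the Kripke countermodel but getting the $\NF$-conservativity input exactly right: I will want to cite the standard fact (attributed to the proof theory of $\GText$, or provable via the $\GText$-translation itself since $\G B \equiv B$ for $B \in \NF$ and $\G$ is sound into $\ML$) that $\CL \vdash C \Rightarrow \ML \vdash C$ for every $C \in \NF$, and to check the closure of $\NF$ under the negation $C \mapsto \neg C = C \to \bot$, which is immediate from Definition~\ref{definition:negativeTranslation} since $\bot \in \NF$. With that, part~(2) is the three-line argument above; part~(1) is the two-node Kripke model. I expect the write-up to be short, with the only care needed being to state the conservativity result in the form actually used.
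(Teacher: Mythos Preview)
Your part~(1) is wrong: $\IL \vdash \neg\neg(P \vee \neg P)$ for every formula $P$, so your $\F \equiv \neg(P \vee \neg P)$ satisfies $\IL \vdash \neg\F$. (Proof in $\IL$: assume $\neg(P \vee \neg P)$; then $P$ leads to $P \vee \neg P$, hence to $\bot$, so $\neg P$; but then $P \vee \neg P$, hence $\bot$.) Your Kripke argument goes astray at the last step: you correctly observe that $\neg(P \vee \neg P)$ \emph{fails} at $w_0$, but forcing $\neg\neg(P \vee \neg P)$ at $w_0$ requires precisely that $\neg(P \vee \neg P)$ fail at every $w \geq w_0$ --- which it does, since it also fails at $w_1$ --- so $w_0$ \emph{does} force $\neg\neg(P \vee \neg P)$. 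More generally, no propositional $\F$ can work: by Glivenko, $\CL \vdash \neg\F$ implies $\IL \vdash \neg\neg\neg\F$, i.e.\ $\IL \vdash \neg\F$. You therefore need a genuinely first-order $\F$. The paper takes $\F \equiv \neg\forall x\,P(x) \wedge \forall x\,\neg\neg P(x)$ and exhibits an infinite Kripke model (with growing domains $\{0,\dots,k\}$ at node $k$, forcing $P(0),\dots,P(k-1)$) that forces $\F$ itself, so a fortiori does not force $\neg\F$.

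Your part~(2), once you stop meandering, is exactly the paper's argument: from $\IL \vdash \F \leftrightarrow B$ with $B \in \NF$ one gets $\IL \vdash \neg\F \leftrightarrow \neg B$ with $\neg B \in \NF$ (closure of $\NF$ under $C \mapsto C \to \bot$); then $\CL \vdash \neg\F$ gives $\CL \vdash \neg B$, conservativity of $\CL$ over $\IL$ on $\NF$ gives $\IL \vdash \neg B$, hence $\IL \vdash \neg\F$, contradiction. The detour through $\IL \vdash B \leftrightarrow \neg\neg B$ is unnecessary (and the lemma you sketch for it is false as stated: $\forall x\,\neg\neg C \leftrightarrow \neg\neg\forall x\,C$ is not intuitionistically valid), but you do not actually use it in the end.
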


\begin{proof}\mbox{}
  \begin{enumerate}
    \item Let $P$ be an unary predicate symbol. We are going to prove that $\F \equiv \neg \forall x P(x) \wedge \forall x \neg\neg P(x)$ is such that $\CL \vdash \neg \F$ but $\IL \nvdash \neg \F$. Since $\CL \vdash \neg \F$ is obvious, we move on to prove $\IL \nvdash \neg \F$ by showing that the Kripke model $\mathcal K$ from figure \ref{figure:KripkeModelForcingF} forces $\F$.
    \begin{itemize}
      \item $\mathcal K$ forces $\neg \forall x P(x)$ because no node forces $\forall x P(x)$.
      \item $\mathcal K$ forces $\forall x \neg\neg P(x)$ because every node $k$ forces $\neg\neg P(d)$ for all $d$ in its domain $\{0,\ldots,k\}$ since the node $k + 1$ forces $P(d)$.
    \end{itemize}
    \begin{figure}[h]
      \centerline{
        \xymatrix{
          \vdots \\
          \node \edge{u} \labelLeft{$\{0,1,2,3\} \ \phantom{3}$} \labelLeft{$\phantom{(}3$} \labelRight{$\ P(0),P(1),P(2)$} \\
          \node \edge{u} \labelLeft{$\{0,1,2\}   \ \phantom{3}$} \labelLeft{$\phantom{(}2$} \labelRight{$\ P(0),P(1)$}      \\
          \node \edge{u} \labelLeft{$\{0,1\}     \ \phantom{3}$} \labelLeft{$\phantom{(}1$} \labelRight{$\ P(0)$}           \\
          \node \edge{u} \labelLeft{$\{0\}       \ \phantom{3}$} \labelLeft{$\phantom{(}0$}                       \\}}
      \caption{a Kripke model $\mathcal K$ forcing $\neg \forall x P(x) \wedge \forall x \neg\neg P(x)$.}
      \label{figure:KripkeModelForcingF}
    \end{figure}
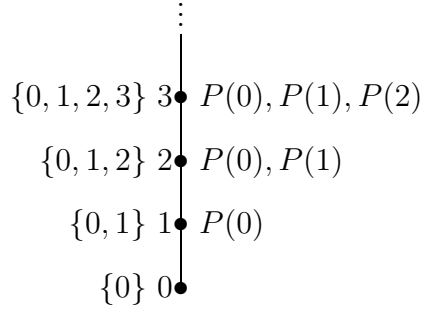
  \item If $\F$ were equivalent in $\IL$ to a formula in $\NF$, then $\neg F$ would also be equivalent in $\IL$ to a formula in $\NF$, so from $\CL \vdash \neg \F$ and the fact that $\CL$ is conservative over $\IL$ with respect to $\NF$ we would get $\IL \vdash \neg \F$, contradicting point 1.\qedhere
  \end{enumerate}
\end{proof}

Now we prove our main theorem giving two counterexamples to the conjecture: $\NOneText$ and $\NTwoText$ are negative translations into $\IL$ (even into $\ML$) not equivalent in $\IL$ to the usual negative translations into $\IL$ (for suitable formulas $\F$). The claims of this theorem are summarised in figure \ref{figure:theoremOnScale}.
\begin{theorem}
  \label{theorem:main}
  The functions $\NOneText$ and $\NTwoText$:
  \begin{enumerate}
    \item \label{item:soundness}have a soundness theorem into $\ML$ for all formulas $\F$;
    \item \label{item:characterisation}have a characterisation theorem if and only if $\CL \vdash \neg\F$;
    \item \label{item:equivalence}are equivalent in $\IL$ to $\GText$ if and only if $\IL \vdash \neg\F$.
    \end{enumerate}
    So, if $\CL \vdash \neg \F$ but $\IL \nvdash \neg F$, then $\NOneText$ and $\NTwoText$ are negative translations into $\ML$ not equivalent in $\IL$ to $\GText$.
  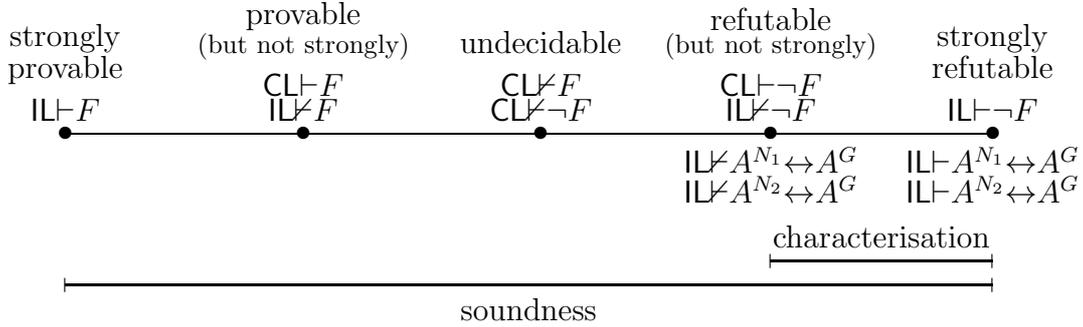
\begin{figure}[h]
    \centerline{
      \xymatrix@C=83pt@L=5pt{
        \node \edge{r} \labelUpStackThreeB{strongly}{provable}{$\IL {\vdash} \F$} &
        \node \edge{r} \labelUpStackFour{provable}{\footnotesize(but not strongly)}{$\CL {\vdash} \F$}{$\IL {\nvdash} \F$} &
        \node \edge{r} \labelUpStackThreeA{undecidable}{$\CL {\nvdash} \F$}{$\CL {\nvdash} \neg \F$} &
        \node \edge{r} \labelUpStackFour{refutable}{\footnotesize(but not strongly)}{$\CL {\vdash} \neg \F$}{$\IL {\nvdash} \neg \F$}{refutable} \labelDownStackTwo{$\IL {\nvdash} \NOne A {\leftrightarrow} \G A$}{$\IL {\nvdash} \NTwo A {\leftrightarrow} \G A$} &
        \node \labelUpStackThreeB{strongly}{refutable}{$\IL {\vdash} \neg \F$} \labelDownStackTwo{$\IL {\vdash} \NOne A {\leftrightarrow} \G A$}{$\IL {\vdash} \NTwo A {\leftrightarrow} \G A$}
        \\\\
        \phantomNode & & & \phantomNode & \phantomNode \edgeTipUp{llll}{soundness} \edgeTipDown{l}{characterisation}
      }
    }
    \caption{theorem \ref{theorem:main} on the scale of provability-refutability.}
    \label{figure:theoremOnScale}
  \end{figure}
\end{theorem}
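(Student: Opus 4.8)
The plan is to prove the three numbered items and then read off the concluding sentence. Items~\ref{item:characterisation} and~\ref{item:equivalence} are biconditionals, and their seemingly harder directions are actually trivial: for each ``only if'' one just instantiates the displayed equivalence at $A \equiv \bot$, using $\NOne \bot \equiv \bot \vee \F$ and $\NTwo \bot \equiv \F$ (recall $\G \bot \equiv \bot$, so $\NTwo\bot \equiv \G\bot[\F/\bot] \equiv \F$). So the item with real content is item~\ref{item:soundness}. Once all three are in hand, the final sentence follows from item~\ref{item:soundness}, from item~\ref{item:characterisation} applied with the hypothesis $\CL \vdash \neg\F$, from item~\ref{item:equivalence} applied with $\IL \nvdash \neg\F$, from the remark that a negative translation into $\ML$ is also one into $\IL$, and from Lemma~\ref{lemma:F}(1), which guarantees that a suitable $\F$ exists.

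For item~\ref{item:soundness} I would handle $\NOneText$ and $\NTwoText$ separately, in each case bootstrapping from the soundness theorem into $\ML$ of $\GText$. Suppose $\CL + \Gamma \vdash A$; that theorem gives $\ML + \G\Gamma \vdash \G A$, a derivation using only finitely many premisses $\G{B_1}, \dots, \G{B_n}$ with $B_1, \dots, B_n \in \Gamma$. For $\NOneText$, work in $\ML$ in the context $\{\G{B_i} \vee \F : 1 \le i \le n\}$ and perform $n$ nested $\vee$-eliminations on $\G{B_1} \vee \F, \dots, \G{B_n} \vee \F$: in every branch in which some disjunct $\F$ is chosen, $\G A \vee \F$ follows by $\vee$-introduction, and in the one remaining branch $\G{B_1}, \dots, \G{B_n}$ are all available, so the derivation above yields $\G A$ and hence $\G A \vee \F$; this gives $\ML + \NOne\Gamma \vdash \NOne A$ (ex falso is never used). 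For $\NTwoText$, the key observation is that in $\ML$ the constant $\bot$ carries no inference rule of its own and so behaves exactly like a propositional atom; hence the uniform substitution $[\F/\bot]$ turns any $\ML$-derivation into an $\ML$-derivation, provided one first renames bound variables so that no quantifier captures a free variable of $\F$. Applying this to $\ML + \G\Gamma \vdash \G A$ gives $\ML + (\G\Gamma)[\F/\bot] \vdash (\G A)[\F/\bot]$, which is exactly $\ML + \NTwo\Gamma \vdash \NTwo A$.

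For the ``if'' direction of item~\ref{item:characterisation}: from $\CL \vdash \neg\F$ we get $\CL \vdash \F \leftrightarrow \bot$, whence by replacement of equivalent subformulas $\CL \vdash \NTwo A \leftrightarrow \G A$ and (since $\NOne A \equiv \G A \vee \F$) $\CL \vdash \NOne A \leftrightarrow \G A$; composing with the characterisation theorem $\CL \vdash \G A \leftrightarrow A$ of $\GText$ yields $\CL \vdash \NOne A \leftrightarrow A$ and $\CL \vdash \NTwo A \leftrightarrow A$. For the ``only if'' direction, instantiating the characterisation theorem at $A \equiv \bot$ gives $\CL \vdash (\bot \vee \F) \leftrightarrow \bot$ for $\NOneText$ and $\CL \vdash \F \leftrightarrow \bot$ for $\NTwoText$, and either of these implies $\CL \vdash \neg\F$. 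Item~\ref{item:equivalence} is proved in the same way with $\IL$ replacing $\CL$ and $\G A$ replacing $A$: the ``if'' direction uses $\IL \vdash \neg\F \Rightarrow \IL \vdash \F \leftrightarrow \bot$ (ex falso is available in $\IL$) together with replacement, and the ``only if'' direction instantiates at $A \equiv \bot$ using $\G\bot \equiv \bot$.

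I expect the one spot needing genuine care to be the soundness of $\NTwoText$: the argument hinges on $\bot$ having no proper inference rule in $\ML$ --- a feature of minimal logic but not of intuitionistic logic --- which is what makes $[\F/\bot]$ a legitimate operation on derivations, and on the bookkeeping that prevents the free variables of $\F$ from being captured by quantifiers occurring inside $\G A$. For $\NOneText$ the only subtlety is that $\Gamma$ may be infinite, which is why one passes to a finite sub-context before running the nested $\vee$-eliminations.
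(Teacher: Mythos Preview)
Your proposal is correct and follows essentially the same route as the paper: soundness for $\NOneText$ via nested $\vee$-eliminations over a finite subcontext, soundness for $\NTwoText$ via the observation that $\bot$ has no proper rule in $\ML$ so the substitution $[\F/\bot]$ preserves $\ML$-derivations, and items~\ref{item:characterisation} and~\ref{item:equivalence} by instantiating at $A\equiv\bot$ for the ``only if'' directions and by replacement of $\F$ with $\bot$ for the ``if'' directions. One small remark: the concluding sentence of the theorem is a conditional, so invoking Lemma~\ref{lemma:F}(1) is unnecessary there (it is only needed to show the hypothesis is non-vacuous, which is outside the theorem itself).
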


\begin{proof}\mbox{}
  \begin{enumerate}
    \item Consider an arbitrary formula $\F$.

    First let us consider the case of $\NOneText$. By direct proof, consider an arbitrary set of formulas $\Gamma$ and an arbitrary formula $A$, assume $\CL + \Gamma \vdash A$ and let us prove $\ML + \NOne \Gamma \vdash \NOne A$. Since a proof in $\CL$ of $A$ uses only finitely many formulas $A_1,\ldots,A_n$ from $\Gamma$, then $\CL + A_1 + \cdots + A_n \vdash A$. By the soundness theorem into $\ML$ of $\GText$ we get $\ML + \G A_1 + \cdots + \G A_n \vdash \G A$ (where $\G A_i$ abbreviates $\G{(A_i)}$), that is (1)~$\ML \vdash \G A_1 \wedge \cdots \wedge \G A_n \to \G A$ by the deduction theorem of $\ML$.

    Let us show (2)~$\ML + \G A_1 \vee \F + \cdots + \G A_n \vee \F \vdash \G A \vee \F$. We argue inside $\ML$. Assume $\G A_1 \vee \F,\ldots,\G A_n \vee \F$. Each $\G A_i \vee F$ gives us two cases: the case of $\G A_i$ and the case of $\F$.
    \begin{itemize}
      \item If for some $\G A_i \vee F$ we have the case $\F$, then trivially $\G A \vee \F$.
      \item Otherwise in all $\G A_i \vee F$ we have the case of $\G A_i$, so we have $\G A_1 \wedge \cdots \wedge \G A_n$, thus $\G A$ by (1), therefore trivially $\G A \vee \F$.
    \end{itemize}
    So we have (2) as we wanted. This argument is illustrated for $n = 2$ in figure \ref{figure:argument}.
    \begin{figure}[h]
      \centerline{
        \xymatrix@C=60pt@R=2pt{
                                                             &                                                  & \G A_1 \wedge \G A_2\ar@{->}[r]& \G A \\
                                                             & \G A_2 \vee \F\ar@{-}[ur]^{\G A_2}\ar@{-}[dr]_\F &  \\
                                                             &                                                  & \G A_1 \wedge \F\ar@{->}[r]    & \G A\vee\F \\
          \G A_1 \vee \F\ar@{-}[uur]^{\G A_1}\ar@{-}[ddr]_\F &                                                  &  \\
                                                             &                                                  & \F \wedge \G A_2\ar@{->}[r]    & \G A\vee\F \\
                                                             & \G A_2 \vee \F\ar@{-}[ur]^{\G A_2}\ar@{-}[dr]_\F &  \\
                                                             &                                                  & \F \wedge \F\ar@{->}[r]        & \G A\vee\F
        }
      }
      \caption{argument of $\ML + \G A_1 \vee \F + \cdots + \G A_n \vee \F \vdash \G A \vee \F$ for $n = 2$.}
      \label{figure:argument}
    \end{figure}
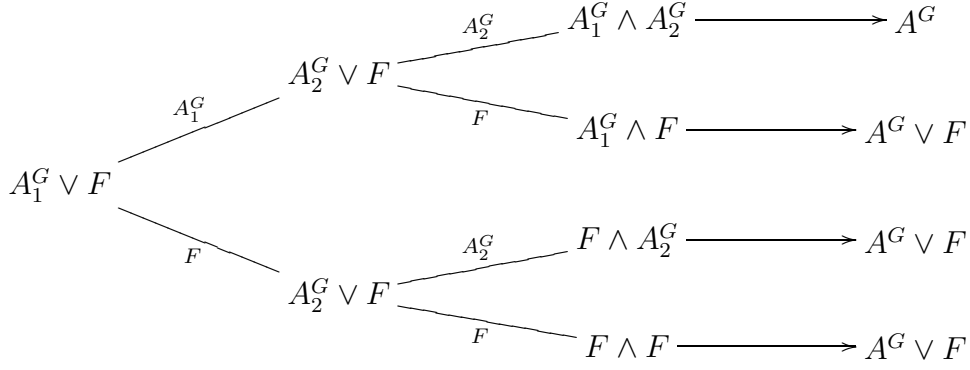

    But (2) is $\ML + \NOne A + \cdots + \NOne A \vdash \NOne A$, so we get $\ML + \NOne \Gamma \vdash \NOne A$, as we wanted.

    Now let us consider the case of $\NTwoText$. By direct proof, consider an arbitrary set of formulas $\Gamma$ and an arbitrary formula $A$, assume $\CL + \Gamma \vdash A$ and let us prove $\ML + \NTwo \Gamma \vdash \NTwo A$. By the soundness theorem into $\ML$ of $\GText$ we get $\ML + \G A_1 + \cdots + \G A_n \vdash \G A$. Since $\bot$ is treated as an arbitrary propositional letter in $\ML$, we can replace $\bot$ by $\F$ getting $\ML + \G A_1[\F/\bot] + \cdots + \G A_n[\F/\bot] \vdash \G A[\F/\bot]$, that is $\ML + \NTwo A_1 + \cdots + \NTwo A_n \vdash \NTwo A$, as we wanted.

    \item First let us consider the case of $\NOneText$.
    \begin{myDescription}{($\Rightarrow$)}
      \item[($\Rightarrow$)] By direct proof, assume that $\NOneText$ has a characterisation theorem and let us prove $\CL \vdash \neg \F$. By the characterisation theorem of $\NOneText$ we have $\CL \vdash \NOne \bot \leftrightarrow \bot$ where $\NOne \bot \equiv \bot \vee \F$, so $\CL \vdash \neg\F$, as we wanted.
      \item[($\Leftarrow$)] By direct proof, assume $\CL \vdash \neg\F$, consider an arbitrary formula $A$ and let us prove $\CL \vdash \NOne A \leftrightarrow A$. By the characterisation theorem of $\GText$ we have $\CL \vdash \G A \leftrightarrow A$. Since $\CL \vdash \neg \F$ by assumption, it makes no difference in $\CL$ to replace $\G A$ by $\G A \vee \F$. So $\CL \vdash \G A \vee \F \leftrightarrow A$, that is $\CL \vdash \NOne A \leftrightarrow A$, as we wanted.
    \end{myDescription}

    Now let us consider the case of $\NTwoText$.
    \begin{myDescription}{($\Rightarrow$)}
      \item[($\Rightarrow$)] Analogous to the case of $\NOneText$.
      \item[($\Leftarrow$)] By direct proof, assume $\CL \vdash \neg \F$, consider an arbitrary formula $A$ and let us prove $\CL \vdash \NTwo A \leftrightarrow A$. By the characterisation theorem of $\GText$ we have $\CL \vdash \G A \leftrightarrow A$. Since $\CL \vdash \neg \F$ by assumption, it makes no difference in $\CL$ to replace $\bot$ by $\F$. So $\CL \vdash \G A[\F/\bot] \leftrightarrow A$, that is $\CL \vdash \NTwo A \leftrightarrow A$, as we wanted.
    \end{myDescription}

    \item First let us consider the case of $\NOneText$.
    \begin{myDescription}{($\Rightarrow$)}
      \item[($\Rightarrow$)] By direct proof, assume that $\NOneText$ and $\GText$ are equivalent in $\IL$ and let us prove $\IL \vdash \neg F$. By the assumption we have $\IL \vdash \NOne \bot \leftrightarrow \G \bot$ where $\NOne \bot \equiv \bot \vee \F$ and $\G \bot \equiv \bot$. So $\IL \vdash \neg\F$, as we wanted.
      \item[($\Leftarrow$)] By direct proof, assume $\IL \vdash \neg\F$, take an arbitrary formula $A$ and let us prove $\IL \vdash \NOne A \leftrightarrow \G A$. By the assumption it makes no difference in $\IL$ to replace $\G A$ by $\G A \vee \F$. So $\IL \vdash \G A \vee \F \leftrightarrow \G A$, that is $\IL \vdash \NOne A \leftrightarrow \G A$, as we wanted.
    \end{myDescription}

    Now let us consider the case of $\NTwoText$.
    \begin{myDescription}{($\Rightarrow$)}
      \item[($\Rightarrow$)] Analogously to the case of $\NOneText$.
      \item[($\Leftarrow$)] By direct proof, assume $\IL \vdash \neg \F$, take an arbitrary formula $A$ and let us prove $\IL \vdash \NTwo A \leftrightarrow \G A$. By the assumption it makes no difference in $\IL$ to replace $\bot$ by $\F$. So $\IL \vdash \G A[\F/\bot] \leftrightarrow \G A$, that is $\IL \vdash \NTwo A \leftrightarrow \G A$, as we wanted.\qedhere
    \end{myDescription}
  \end{enumerate}
\end{proof}

We saw in theorem \ref{theorem:main} that $\NOneText$ and $\NTwoText$ are two counterexamples to the conjecture (for suitable $\F$). Now in proposition \ref{proposition:counterexamplesNotEquivalent} we clarify that these two counterexamples are different (for the same suitable $\F$).

\begin{proposition}
  \label{proposition:counterexamplesNotEquivalent}
  If $\CL \vdash \neg \F$ but $\IL \nvdash \neg \F$, then $\NOneText$ and $\NTwoText$ are not equivalent in $\IL$.
\end{proposition}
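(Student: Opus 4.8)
The plan is to argue by contradiction. Suppose $\NOneText$ and $\NTwoText$ are equivalent in $\IL$. I will extract from this the instance of excluded middle $\IL \vdash \F \vee \neg\F$, and then use the disjunction property of $\IL$ together with $\CL \vdash \neg\F$ to conclude $\IL \vdash \neg\F$, contradicting the hypothesis $\IL \nvdash \neg\F$.

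First I would test the assumed equivalence on the simplest nontrivial formula. Fix an atomic formula $P$ whose predicate symbol does not occur in $\F$ (possible, since $\F$ contains only finitely many predicate symbols). Then $\NOne P \equiv \G P \vee \F \equiv \neg\neg P \vee \F$, whereas $\NTwo P \equiv \G P[\F/\bot] \equiv (\neg\neg P)[\F/\bot] \equiv (P \to \F) \to \F$; hence the assumption gives $\IL \vdash (\neg\neg P \vee \F) \leftrightarrow ((P \to \F) \to \F)$. Since derivability in $\IL$ is preserved under substituting a formula for an atom and $P$ does not occur in $\F$, replacing $P$ throughout by an arbitrary formula $C$ yields
\[
  \IL \vdash (\neg\neg C \vee \F) \leftrightarrow ((C \to \F) \to \F) \qquad \text{for every formula $C$.}
\]

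I would then instantiate this schema twice. For $C \defEquiv \F$: since $\F \to \F$ is provable, $(\F \to \F) \to \F$ is equivalent in $\IL$ to $\F$, so the left-to-right direction gives $\IL \vdash (\neg\neg\F \vee \F) \to \F$, hence $\IL \vdash \neg\neg\F \to \F$. For $C \defEquiv \neg\F$: using the routine $\IL$-equivalences $\neg\neg\neg\F \leftrightarrow \neg\F$ and $(\neg\F \to \F) \leftrightarrow \neg\neg\F$, the schema becomes $\IL \vdash (\neg\F \vee \F) \leftrightarrow (\neg\neg\F \to \F)$; its right-to-left direction together with $\IL \vdash \neg\neg\F \to \F$ gives, by modus ponens, $\IL \vdash \F \vee \neg\F$. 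Finally, $\CL \vdash \neg\F$ and the consistency of $\CL$ give $\IL \nvdash \F$, so the disjunction property of $\IL$ applied to $\IL \vdash \F \vee \neg\F$ forces $\IL \vdash \neg\F$ --- the desired contradiction.

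I expect the one genuinely delicate point to be the passage to the displayed schema: one must notice that substituting $C$ for the atom $P$ does \emph{not} recover $\NOne C \leftrightarrow \NTwo C$ (since $\G C \not\equiv \neg\neg C$ for compound $C$), but produces exactly that schema, and then one must hit on the two instances $C \defEquiv \F$ and $C \defEquiv \neg\F$ whose combination delivers $\F \vee \neg\F$. Everything else --- the three small $\IL$-equivalences and the disjunction property of $\IL$ --- is standard.
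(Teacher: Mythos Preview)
Your argument is correct and takes a genuinely different route from the paper.  The paper gives a \emph{semantic} proof: from $\CL\vdash\neg\F$ it extracts a Kripke model $\mathcal K$ (with a bottom node) forcing $\neg\F$, and from $\IL\nvdash\neg\F$ a Kripke model $\mathcal L$ (with a bottom node) forcing $\F$; it then glues $\mathcal K$ and $\mathcal L$ under a fresh root, forces a fresh nullary atom $Q$ exactly on the $\mathcal K$-side, and checks that the resulting model refutes $\NTwo Q\to\NOne Q$, i.e.\ $((Q\to\F)\to\F)\to(\neg\neg Q\vee\F)$.  Your proof is \emph{syntactic}: from the assumed equivalence at a fresh atom $P$ you pass, via closure of $\IL$ under substitution for predicate symbols, to the schema $(\neg\neg C\vee\F)\leftrightarrow((C\to\F)\to\F)$, and the two instantiations $C\defEquiv\F$ and $C\defEquiv\neg\F$ cleverly yield $\IL\vdash\F\vee\neg\F$; the disjunction property then finishes.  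The paper's approach produces an explicit counter\-model and is self-contained once Kripke semantics is available; yours avoids model theory altogether and is arguably more elementary, at the price of invoking the disjunction property as a black box.

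One small point you should address explicitly: the disjunction property, as the paper states it, is for \emph{sentences}, whereas the proposition allows $\F$ to be open.  This is harmless in pure $\IL$ --- free variables behave like fresh constants, so one may replace them by fresh constants, apply the disjunction property to the resulting sentence, and then generalise back --- but you should say so.  Also, make clear that $P$ is taken to be a \emph{nullary} predicate symbol not occurring in $\F$ (the paper's own proof uses exactly such a fresh nullary $Q$), so that substitution of an arbitrary formula $C$ for $P$ is unproblematic.
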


\begin{proof}
  By direct proof, assume that $\CL \vdash \neg \F$ but $\IL \nvdash \neg \F$ and let us prove $\IL \nvdash \NOne A \leftrightarrow \NTwo A$. We start by making two observations about Kripke models.
  \begin{enumerate}
    \item There exists a Kripke model $\mathcal K$, with a bottom node, that forces $\neg\F$.

    Let us prove this claim. Since $\CL \vdash \neg \F$ by assumption, any classical model forces $\neg F$. Regarding a classical model as a Kripke model with only one node, we have a Kripke model, with a bottom node, forcing $\neg \F$, as we wanted.

  For example, for the $\F \equiv \neg \forall x P(x) \wedge \forall x \neg\neg P(x)$ used in the proof of lemma \ref{lemma:F}, we can take $\mathcal K$ to be the Kripke model of figure \ref{figure:KripkeModelForcingNotF}.
  \begin{figure}[h]
    \centerline{
      \xymatrix{
        \node \labelLeft{$\{0\}\ $} \labelRight{$\ P(0)$}
      }
    }
    \caption{a Kripke model $\mathcal K$ forcing $\neg\F$ where $\F \equiv \neg \forall x P(x) \wedge \forall x \neg\neg P(x)$.}
    \label{figure:KripkeModelForcingNotF}
  \end{figure}
  \item There exists a Kripke model $\mathcal L$, with a bottom node, that forces $\F$.

  Let us prove this claim. Since $\IL \nvdash \neg\F$ by assumption, there exists a Kripke model $\mathcal L'$ that does not force $\neg\F$, that is some node $n'$ of $\mathcal L'$ does not force $\neg\F$. Then there exists a node $n$ above or equal to $n'$ that forces $\F$. By restricting $\mathcal L'$ to all the nodes above or equal to $n$ we get a Kripke model $\mathcal L$, with bottom node $n$, that forces $\F$, as we wanted.

  For example, for the $\F \equiv \neg \forall x P(x) \wedge \forall x \neg\neg P(x)$ used in the proof of lemma \ref{lemma:F}, we can take $\mathcal L$ to be the Kripke model of figure \ref{figure:KripkeModelForcingF}.
  \end{enumerate}

  Now let us return to our goal: $\IL \nvdash \NOne A \leftrightarrow \NTwo A$. Consider a fresh nullary predicate $Q \not\equiv \bot$. Since $Q$ is fresh and $Q \not\equiv \bot$,
  \begin{itemize}
    \item $\mathcal L$ forces $\neg Q$;
    \item we can force $Q$ in $\mathcal K$;
    \item forcing $Q$ in $\mathcal K$ will not collide with $\mathcal K$ forcing $\neg F$.
  \end{itemize}
  We will show $\IL \nvdash \NTwo Q \to \NOne Q$, where $\NTwo Q \equiv (Q \to \F) \to \F$ and $\NOne Q \equiv \neg\neg Q \vee \F$, by presenting a Kripke model not forcing ($*$)~$((Q \to \F) \to \F) \to \neg\neg Q \vee \F$.

  The base nodes of $\mathcal K$ and $\mathcal L$ have (by definition of Kripke model) non empty domains. We can assume (renaming elements if necessary) that those domains share a common element $d$. Consider the Kripke model $\mathcal M$ from figure \ref{figure:KripkeModelThree} obtained by:
  \begin{itemize}
    \item connecting a fresh bottom node $0$, with domain $\{d\}$, to the bottom nodes of $\mathcal K$ and $\mathcal L$;
    \item for every node $n$ of $\mathcal M$, forcing $Q$ in $n$ if and only if $n$ forces $\neg F$;\\
      or equivalently, forcing $Q$ in $\mathcal K$ but not in $\mathcal L$ and $0$.
  \end{itemize}
  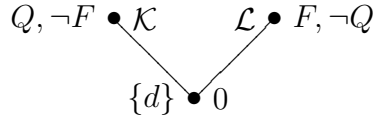
\begin{figure}[h]
    \centerline{
      \xymatrix{
        \node \labelLeft{$Q,\neg F\ $} \labelRight{$\ \mathcal K$} & & \node\labelLeft{$\mathcal L \ $} \labelRight{$\ F,\neg Q$} \\
        & \node \labelLeft{$\{d\} \ $} \labelRight{$\ 0$} \edge{ul} \edge{ur} &
      }
    }
    \caption{a Kripke model $\mathcal M$ not forcing $((Q \to \F) \to \F) \to \neg\neg Q \vee \F$.}
    \label{figure:KripkeModelThree}
  \end{figure}
  Note that $\mathcal M$ is well-defined because:
  \begin{itemize}
    \item the domains of $\mathcal M$ are monotone since $\{d\}$ is contained in the domains of $\mathcal K$ and $\mathcal L$;
    \item the forcing relation in $\mathcal M$ is monotone since $Q$ is forced only in the entire $\mathcal K$.
  \end{itemize}
  Now we argue that $\mathcal M$ does not force ($*$).
  \begin{itemize}
    \item The node $0$ does not force $\neg\neg Q$ because $\mathcal L$ forces $\neg Q$.
    \item The node $0$ does not force $\F$ because $\mathcal K$ forces $\neg\F$.
    \item Let us show that the node $0$ forces $(Q \to \F) \to \F$, that is any node $n$ does not force $Q \to \F$ or forces $\F$. We consider the following three cases.
    \begin{itemize}
      \item If $n$ is in $\mathcal K$, then $n$ does not force $Q \to \F$ because $\mathcal K$ forces $Q$ (by construction of $\mathcal M$) and $\neg\F$.
      \item If $n$ is in $\mathcal L$, then $n$ forces $\F$ because $\mathcal L$ forces $\F$.
      \item If $n$ is $0$, then $n$ does not force $Q \to \F$, otherwise $\mathcal K$ would force $Q \to \F$ and we already saw that this is false.
    \end{itemize}
  \end{itemize}
  We conclude that the node $0$ does not force ($*$), as we wanted.
\end{proof}

As a curiosity, let us see that we have the factorisations $\NTwoText = \FDText \circ \GText$ and $\NTwoText = \rFDText \circ \GText$ of $\NTwoText$ in terms of Friedman-Dragalin translation $\FDText$\cite{Friedman1978,Dragalin1980a} (better known as Friedman's $A$-translation), its refinement $\rFDText$\cite{BergerEtAl2002} and $\GText$. The translation $\FDText$ was used by Friedman and Dragalin to prove that certain intuitionistic theories $\IT$ are closed under Markov rule in the sense of $\IT \vdash \neg\neg \exists x P(x) \ \Rightarrow \ \IT \vdash \exists x P(x)$ where $P(x)$ is an atomic formula.

\begin{definition}
  Fix a formula $\F$.
  \begin{itemize}
    \item \emph{Friedman-Dragalin translation} $\FDText$ maps each formula $A$ to the formula $\FD A$ obtained from $A$ by simultaneously replacing in $A$:
    \begin{itemize}
      \item $\bot$ by $\F$;
      \item all atomic subformulas $P \not\equiv \bot$ by $P \vee \F$.
    \end{itemize}
    \item The \emph{refined Friedman-Dragalin translation} $\rFDText$ maps each formula $A$ to the formula $\rFD A \defEquiv A[\F/\bot]$.
  \end{itemize}
\end{definition}

Naming $\rFDText$ a refinement of $\FDText$ is a little bit misleading, as we explain now. On the one hand, $\rFDText$ simplifies $\FDText$ by dropping the replacement of atomic subformulas $P \not\equiv \bot$ by $P \vee \F$. On the other hand,
\begin{itemize}
  \item $\FDText$ is sound in the sense of $\IL \vdash A \ \Rightarrow \ \ML \vdash \FD A$;
  \item in general $\rFDText$ is sound only in the weaker sense of $\ML \vdash A \ \Rightarrow \ \ML \vdash \rFD A$.
\end{itemize}
So we can say that $\rFDText$ only really refines $\FDText$ on $\ML$, not on $\IL$. This limitation of $\rFDText$ is a problem if we want to apply a Friedman-Dragalin-like translation in $\IL$. But it is not problem if we only want to apply a Friedman-Dragalin-like translation after a negative translation into $\ML$ (not just into $\IL$).

\begin{proposition}[factorisations $\NTwoText = \FDText \circ \GText$ and $\NTwoText = \rFDText \circ \GText$]\mbox{}
  \begin{enumerate}
    \item For all formulas $A$ we have $\ML \vdash \NTwo A \leftrightarrow \FD{(\G A)}$.
    \item For all formulas $A$ we have $\NTwo A \equiv \rFD{(\G A)}$.
  \end{enumerate}
\end{proposition}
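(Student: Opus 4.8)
The plan is to treat the two factorisations separately: the second one is essentially definitional, so I would dispose of it in one line, and then devote the argument to the first, which reduces to a short structural induction. For part~2, I would simply unfold the definitions: by definition of $\rFDText$ we have $\rFD{(\G A)} \equiv (\G A)[\F/\bot]$, and by definition of $\NTwoText$ the formula $\NTwo A$ is exactly $(\G A)[\F/\bot]$; hence $\NTwo A \equiv \rFD{(\G A)}$ with literally nothing further to prove.

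For part~1 I would first recall the strengthening already established for $\GText$, namely that $\G A \in \NF$ for every formula $A$. It then suffices to prove the auxiliary claim that $\ML \vdash C[\F/\bot] \leftrightarrow \FD C$ for every $C \in \NF$, and afterwards instantiate $C \defEq \G A$, using $\NTwo A \equiv (\G A)[\F/\bot]$, to obtain $\ML \vdash \NTwo A \leftrightarrow \FD{(\G A)}$. The auxiliary claim I would prove by induction on the generation of $\NF$ (Definition~\ref{definition:negativeTranslation}). In the base case $C \equiv \bot$, both $C[\F/\bot]$ and $\FD C$ are the formula $\F$, so the equivalence is trivial. In the base case $C \equiv \neg P$ with $P \not\equiv \bot$ atomic, we have $C[\F/\bot] \equiv P \to \F$ whereas $\FD C \equiv (P \vee \F) \to \F$, so I would invoke the minimal-logic equivalence $\ML \vdash (P \to \F) \leftrightarrow ((P \vee \F) \to \F)$ — a short derivation (from $P$ infer $P \vee \F$ for one direction, a case split on $P \vee \F$ for the other). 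In the inductive cases $C \equiv D \wedge E$, $C \equiv D \to E$, $C \equiv \forall x D$, I would use that both $(\cdot)[\F/\bot]$ and $\FDText$ commute with $\wedge$, $\to$ and $\forall$, together with the fact that provable equivalence in $\ML$ is a congruence for these connectives, so the induction hypotheses for the immediate subformulas yield the claim for $C$.

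I do not expect a genuine obstacle. The only non-formal ingredient is the minimal-logic equivalence in the negated-atom case, which is routine. The one point deserving a sentence of care is that $\FDText$, as defined, rewrites \emph{all} atomic subformulas $P \not\equiv \bot$ as $P \vee \F$: one must observe that in a formula of $\NF$ every such atomic subformula occurs only as the antecedent of a subformula $\neg P \equiv P \to \bot$, which is exactly why inducting on the generation of $\NF$ (rather than on arbitrary formulas) is the right frame, and why $\FD{(\G A)}$ and $\NTwo A = (\G A)[\F/\bot]$ can differ only at these negated atoms — and there only by the harmless passage between $(P \vee \F) \to \F$ and $P \to \F$.
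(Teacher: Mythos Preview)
Your proposal is correct. For part~2 you do exactly what the paper does: unfold the two definitions and observe syntactic identity. For part~1 your argument is sound, and the only non-trivial step---the minimal-logic equivalence $(P \to \F) \leftrightarrow ((P \vee \F) \to \F)$ in the negated-atom case---is the same one the paper isolates.

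The route, however, is genuinely different. The paper proceeds by induction on the structure of the \emph{original} formula $A$: it first unfolds both $\NTwoText$ and the composite $\GFDText \defEq \FDText \circ \GText$ clause by clause (seven clauses each, mirroring the definition of $\GText$), and then checks $\ML \vdash \NTwo A \leftrightarrow \GFD A$ case by case, with the atomic case $P \not\equiv \bot$ as the only substantive one. You instead factor through the property $\G A \in \NF$ and prove the more general auxiliary lemma $\ML \vdash C[\F/\bot] \leftrightarrow \FD C$ for all $C \in \NF$, by induction on the generation of $\NF$. Your approach is more modular---the auxiliary lemma is reusable and you avoid writing out the seven-clause unfolding of $\GFDText$---at the mild cost of invoking the extra fact $\G A \in \NF$ (which the paper has already recorded). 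The paper's approach is more self-contained and concrete. Both collapse to the same one-line computation at the negated atom, so neither is deeper than the other. One tiny bookkeeping remark: Definition~\ref{definition:negativeTranslation} also puts $\neg\bot$ into $\NF$ via the rule for atomic $P$; you exclude $P \equiv \bot$ from your base case, but this is harmless since $\neg\bot \equiv \bot \to \bot$ is already covered by your $\to$ clause (or, equally, by observing that $(\neg\bot)[\F/\bot]$ and $\FD{(\neg\bot)}$ are both literally $\F \to \F$).
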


\begin{proof}\mbox{}
  \begin{enumerate}
    \item Let us abbreviate $\FD{(\G A)}$ by $\GFD A$. First we recall the definition of $\GText$ writing all negations $\neg A$ in the form $A \to \bot$:
    \begin{align*}
      \G P &\defEquiv (P \to \bot) \to \bot \ \ (P \not\equiv \bot \text{ atomic}), \\
      \G \bot &\defEquiv \bot, \\
      \G{(A \wedge B)} &\defEquiv \G A \wedge \G B, \\
      \G{(A \vee B)} &\defEquiv (\G A \to \bot) \wedge (\G B \to \bot) \to \bot, \\
      \G{(A \to B)} &\defEquiv \G A \to \G B, \\
      \G{(\forall x A)} &\defEquiv \forall x \G A, \\
      \G{(\exists x A)} &\defEquiv \exists x (\G A \to \bot) \to \bot.
    \end{align*}
    Using this we unfold $\NTwoText$ and $\GFDText$ by induction on the structure of formulas:
    \begin{align*}
      \NTwo P &\defEquiv (P \to \F) \to \F \ \ (P \not\equiv \bot \text{ atomic}), \\
      \NTwo \bot &\defEquiv \F, \\
      \NTwo{(A \wedge B)} &\defEquiv \NTwo A \wedge \NTwo B, \\
      \NTwo{(A \vee B)} &\defEquiv (\NTwo A \to \F) \wedge (\NTwo B \to \F) \to \F, \\
      \NTwo{(A \to B)} &\defEquiv \NTwo A \to \NTwo B, \\
      \NTwo{(\forall x A)} &\defEquiv \forall x \NTwo A, \\
      \NTwo{(\exists x A)} &\defEquiv \exists x (\NTwo A \to \F) \to \F, \displaybreak[0] \\[2mm]
      \GFD P &\defEquiv (P \vee \F \to \F) \to \F \ \ (P \not\equiv \bot \text{ atomic}), \\
      \GFD \bot &\defEquiv \F, & \\
      \GFD{(A \wedge B)} &\defEquiv \GFD A \wedge \GFD B, \\
      \GFD{(A \vee B)} &\defEquiv (\GFD A \to \F) \wedge (\GFD B \to \F) \to \F, \\
      \GFD{(A \to B)} &\defEquiv \GFD A \to \GFD B, \\
      \GFD{(\forall x A)} &\defEquiv \forall x \GFD A, \\
      \GFD{(\exists x A)} &\defEquiv \exists x (\GFD A \to \F) \to \F.
    \end{align*}
    Now we prove $\ML \vdash \NTwo A \leftrightarrow \GFD A$ by induction on the structure of formulas. The only non-trivial case is the one of atomic formulas $P \not\equiv \bot$. In this case we argue $\ML \vdash \NTwo P \leftrightarrow \GFD P$ using $\ML \vdash (P \to F) \leftrightarrow (P \vee F \to F)$.
    \item Just note that $\NTwo A$ and $\rFD{(\G A)}$ are both syntactically equal to $\G A[\F/\bot]$: we have we have $\NTwo A \equiv \G A[\F/\bot]$ by definition of $\NTwoText$ and we have $\rFD{(\G A)} \equiv \G A[\F/\bot]$ by definition of $\rFDText$.\qedhere
  \end{enumerate}
\end{proof}

\section{Characterisation of the negative translations intuitionistically equivalent to the usual ones}

There are two properties relative to $\NF$ that the usual negative translations share:
\begin{itemize}
  \item to translate into $\NF$ in $\IL$;
  \item to act as the identity on $\NF$ in $\IL$.
\end{itemize}
We show that these two properties are not shared by $\NOneText$ and $\NTwoText$.

\begin{proposition}
  \label{proposition:counterexamplesNotTranslateNotActAsIdentity}
  If $\CL \vdash \neg\F$ but $\IL \nvdash \neg\F$, then $\NOneText$ and $\NTwoText$:
  \begin{enumerate}
    \item do not translate into $\NF$ in $\IL$;
    \item do not act as the identity on $\NF$ in $\IL$.
  \end{enumerate}
\end{proposition}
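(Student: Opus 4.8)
The plan is to reduce both points to the single instance $A \equiv \bot$ and then invoke Lemma~\ref{lemma:F}. First I would record the elementary computation at $\bot$: since $\G \bot \equiv \bot$, we have $\NOne \bot \equiv \bot \vee \F$ and $\NTwo \bot \equiv \G \bot[\F/\bot] \equiv \bot[\F/\bot] \equiv \F$. Moreover $\IL \vdash (\bot \vee \F) \leftrightarrow \F$, both implications being trivial (using $\IL \vdash \bot \to \F$). So in both cases $\NOne \bot$ and $\NTwo \bot$ are equivalent in $\IL$ to $\F$ (for $\NTwoText$ even syntactically equal). Note also that under the hypothesis $\CL \vdash \neg\F$, $\IL \nvdash \neg\F$ the functions $\NOneText$ and $\NTwoText$ are indeed negative translations into $\ML$ (hence into $\IL$) by Theorem~\ref{theorem:main}, so the two properties in question are meaningfully defined for them.

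For point~1, suppose toward a contradiction that, say, $\NOneText$ translates into $\NF$ in $\IL$. Then there is a $B \in \NF$ with $\IL \vdash \NOne \bot \leftrightarrow B$, hence $\IL \vdash \F \leftrightarrow B$ by the computation above. This makes $\F$ equivalent in $\IL$ to a formula in $\NF$. But by hypothesis $\F$ satisfies $\CL \vdash \neg \F$ and $\IL \nvdash \neg \F$, so Lemma~\ref{lemma:F}(\ref{item:FNotNegative}) says $\F$ is not equivalent in $\IL$ to a formula in $\NF$ — contradiction. The argument for $\NTwoText$ is identical, using $\NTwo \bot \equiv \F$.

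For point~2, take the formula $\bot$, which lies in $\NF$ by Definition~\ref{definition:negativeTranslation}, so the claim is not vacuous. If $\NOneText$ acted as the identity on $\NF$ in $\IL$, then $\IL \vdash \NOne \bot \leftrightarrow \bot$; combined with $\IL \vdash \NOne \bot \leftrightarrow \F$ this yields $\IL \vdash \F \leftrightarrow \bot$, i.e. $\IL \vdash \neg \F$, contradicting the hypothesis $\IL \nvdash \neg \F$. Again $\NTwoText$ is handled the same way via $\NTwo \bot \equiv \F$.

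There is no genuine obstacle here; the only things to get right are the bookkeeping that the $\F$ fixed in the definitions of $\NOneText$ and $\NTwoText$ is exactly the formula to which Lemma~\ref{lemma:F}(\ref{item:FNotNegative}) applies, and the trivial intuitionistic equivalences $\IL \vdash (\bot \vee \F) \leftrightarrow \F$ and $\G\bot \equiv \bot$ that let us replace $\NOne\bot$ and $\NTwo\bot$ by $\F$ throughout.
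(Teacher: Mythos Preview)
Your proof is correct and follows essentially the same approach as the paper: test the properties at the single instance $A \equiv \bot$, reduce $\NOne\bot$ and $\NTwo\bot$ to $\F$ intuitionistically, and then invoke Lemma~\ref{lemma:F}(\ref{item:FNotNegative}) for point~1 and the hypothesis $\IL \nvdash \neg\F$ directly for point~2. The paper only spells out the $\NOneText$ case and declares $\NTwoText$ analogous, whereas you make the computation $\NTwo\bot \equiv \F$ explicit, but the underlying argument is the same.
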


\begin{proof}
  We do the proof only for $\NOneText$ since the case of $\NTwoText$ is analogous. By direct proof, assume $\CL \vdash \neg \F$ but $\IL \nvdash \neg \F$ and let us prove points 1 and 2.
  \begin{enumerate}
    \item If $\NOneText$ would translate into $\NF$ in $\IL$, then $\NOne \bot \equiv \bot \vee \F$, which is equivalent in $\IL$ to $\F$, would be equivalent in $\IL$ to a formula in $\NF$, contradicting point \ref{item:FNotNegative} of lemma \ref{lemma:F}.

    \item If $\NOneText$ would act as the identity on $\NF$ in $\IL$, then $\IL \vdash \NOne \bot \leftrightarrow \bot$ (since $\bot \in \NF$) where $\NOne \bot \equiv \bot \vee \F$, so $\IL \vdash \neg \F$, contradicting the assumption $\IL \nvdash \neg \F$.\qedhere
  \end{enumerate}
\end{proof}

Proposition \ref{proposition:counterexamplesNotTranslateNotActAsIdentity} suggests that the two properties relative to $\NF$ may tell the difference between the usual negative translations into $\IL$ and other negative translations into $\IL$. Indeed, now we prove that they characterise the usual negative translations into $\IL$.

\begin{theorem}
  Let $\NText$ be a negative translation into $\IL$ ($\ML$). The following properties are equivalent.
  \begin{enumerate}
    \item $\NText$ is equivalent in $\IL$ (respectively, $\ML$) to $\GText$.
    \item $\NText$ translates into $\NF$ in $\IL$ (respectively, $\ML$).
    \item $\NText$ acts as the identity on $\NF$ in $\IL$ (respectively, $\ML$).
  \end{enumerate}
\end{theorem}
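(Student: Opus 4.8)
The plan is to prove the three-way equivalence by a cycle of implications: $(1)\Rightarrow(2)\Rightarrow(3)\Rightarrow(1)$. Throughout I would work in $\IL$ (the $\ML$ version is verbatim the same once one checks that the properties of $\GText$ invoked below — soundness, characterisation, translating into $\NF$, acting as the identity on $\NF$ — all hold already in $\ML$, which the excerpt records in the section on $\GText$).

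For $(1)\Rightarrow(2)$: assume $\IL\vdash\N A\leftrightarrow\G A$ for all $A$. Since $\GText$ translates into $\NF$ in $\IL$ (indeed $\G A\in\NF$ outright), for each $A$ there is $B\in\NF$ with $\IL\vdash\G A\leftrightarrow B$, hence $\IL\vdash\N A\leftrightarrow B$ by transitivity; so $\NText$ translates into $\NF$ in $\IL$. For $(2)\Rightarrow(3)$: assume $\NText$ translates into $\NF$ in $\IL$, and take $A\in\NF$; we must show $\IL\vdash\N A\leftrightarrow A$. By the characterisation theorem of $\NText$ we have $\CL\vdash\N A\leftrightarrow A$. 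By hypothesis $\IL\vdash\N A\leftrightarrow B$ for some $B\in\NF$; combined with $\CL\vdash\N A\leftrightarrow A$ this gives $\CL\vdash B\leftrightarrow A$. Now both $B$ and $A$ lie in $\NF$, and $B\leftrightarrow A$ is built from $B,A$ using only $\wedge,\to$ — so $B\leftrightarrow A\in\NF$ as well. Since $\CL$ is conservative over $\IL$ with respect to $\NF$ (the fact already used in the proof of Lemma~\ref{lemma:F}, point~\ref{item:FNotNegative}), we get $\IL\vdash B\leftrightarrow A$, and then $\IL\vdash\N A\leftrightarrow A$ by transitivity with $\IL\vdash\N A\leftrightarrow B$.

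For the closing implication $(3)\Rightarrow(1)$: assume $\NText$ acts as the identity on $\NF$ in $\IL$, and fix an arbitrary formula $A$; we want $\IL\vdash\N A\leftrightarrow\G A$. The key observation is that $\G A\in\NF$, so the hypothesis applies to the formula $\G A$: $\IL\vdash\N{(\G A)}\leftrightarrow\G A$. It therefore suffices to show $\IL\vdash\N A\leftrightarrow\N{(\G A)}$. For this I would use that $\CL\vdash A\leftrightarrow\G A$ (characterisation theorem of $\GText$) together with the soundness theorem of $\NText$: soundness gives, from $\CL\vdash A\to\G A$, that $\IL\vdash\N A\to\N{(\G A)}$, and from $\CL\vdash\G A\to A$ that $\IL\vdash\N{(\G A)}\to\N A$; conjoining, $\IL\vdash\N A\leftrightarrow\N{(\G A)}$. (Here I am using soundness in the harmless form with $\Gamma=\varnothing$, applied to the implications $A\to\G A$ and $\G A\to A$ separately; strictly one may prefer to phrase it as: $\CL\vdash A\leftrightarrow\G A$ implies $\IL\vdash\N A\leftrightarrow\N{(\G A)}$, which follows since $\N{(A\leftrightarrow B)}$ needs no special treatment — or simply apply soundness twice.) Chaining, $\IL\vdash\N A\leftrightarrow\N{(\G A)}\leftrightarrow\G A$, which is $(1)$.

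The main obstacle is the step $(2)\Rightarrow(3)$, and specifically the passage from a classical equivalence to an intuitionistic one: one must notice that $A\leftrightarrow B$ stays inside $\NF$ when $A,B\in\NF$ (this is why $\NF$ is closed under $\wedge$ and $\to$ but the definition need not mention $\leftrightarrow$), and then invoke conservativity of $\CL$ over $\IL$ on $\NF$. Everything else is bookkeeping with the soundness and characterisation theorems of $\NText$ and $\GText$ and the fact that $\G A\in\NF$. One should also remark at the outset that for the $\ML$-versions of $(1)$–$(3)$ the identical argument works, reading $\ML$ for $\IL$ and using that $\CL$ is conservative over $\ML$ with respect to $\NF$ (the fact recalled in the section on $\GText$) in place of conservativity over $\IL$.
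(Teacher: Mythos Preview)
Your cycle $(1)\Rightarrow(2)\Rightarrow(3)\Rightarrow(1)$ and the key ideas in each step match the paper's proof almost exactly. In particular, $(1)\Rightarrow(2)$ and $(2)\Rightarrow(3)$ are fine (your version of $(2)\Rightarrow(3)$ is in fact slightly more explicit than the paper's in naming the witness $B\in\NF$ and checking $B\leftrightarrow A\in\NF$).

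There is, however, a genuine gap in your $(3)\Rightarrow(1)$. You write that ``soundness gives, from $\CL\vdash A\to\G A$, that $\IL\vdash\N A\to\N{(\G A)}$'', and in the parenthetical you say you are using soundness ``with $\Gamma=\varnothing$'' applied to the implication. That does \emph{not} yield what you claim: with $\Gamma=\varnothing$, soundness gives only $\IL\vdash\N{(A\to\G A)}$, and for an abstract negative translation $\NText$ (merely a function satisfying the soundness and characterisation theorems) there is no reason whatsoever for $\N{(A\to\G A)}$ to imply $\N A\to\N{(\G A)}$, nor for $\N{(A\leftrightarrow\G A)}$ to be related to $\N A\leftrightarrow\N{(\G A)}$. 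Your remark that ``$\N{(A\leftrightarrow B)}$ needs no special treatment'' is exactly the unjustified step: $\NText$ is not assumed to be compositional in any way.

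The fix---and this is what the paper does---is to exploit the $\Gamma$ in the soundness theorem. From $\CL\vdash A\to\G A$ one gets $\CL+A\vdash\G A$; soundness with $\Gamma=\{A\}$ then yields $\IL+\N A\vdash\N{(\G A)}$, and the deduction theorem for $\IL$ gives $\IL\vdash\N A\to\N{(\G A)}$. Symmetrically with $\Gamma=\{\G A\}$. This is presumably what your closing ``or simply apply soundness twice'' was gesturing at, but as written your argument appeals to the wrong instance of soundness. Once this is corrected, your proof coincides with the paper's.
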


\begin{proof}
  We do the proof only for negative translations into $\IL$ since the case of negative translations into $\ML$ is analogous.
  \begin{myDescription}{$(2 \Rightarrow 3)$}
    \item[$(1 \Rightarrow 2)$] By direct proof, if $N$ is equivalent in $\IL$ to $\GText$, then $N$ translates into $\NF$ in $\IL$ because $\GText$ does so, as we wanted.
    \item[$(2 \Rightarrow 3)$] By direct proof, assume that $\NText$ translates into $\NF$ in $\IL$, consider an arbitrary formula $A \in \NF$ and let us prove $\IL \vdash \N A \leftrightarrow A$. By assumption the formula $\N A$ is equivalent in $\IL$ to a formula in $\NF$, and we have $A \in \NF$, so the formula $\N A \leftrightarrow A$ is equivalent in $\IL$ to a formula in $\NF$. Since $\CL \vdash \N A \leftrightarrow A$ by the characterisation theorem of $\NText$, and since $\CL$ is conservative over $\IL$ with respect to $\NF$, we have $\IL \vdash \N A \leftrightarrow A$, as we wanted.
    \item[$(3 \Rightarrow 1)$] By direct proof, assume that $\NText$ acts as the identity on $\NF$ in $\IL$, consider an arbitrary formula $A$ and let us prove $\IL \vdash \N A \leftrightarrow \G A$. By the characterisation theorem of $\GText$ we have $\CL + A \vdash \G A$ and $\CL + \G A \vdash A$. So by the soundness theorem into $\IL$ of $\NText$ we get $\IL + \N A \vdash \GN A$ and $\IL + \GN A \vdash \N A$ (where $\GN A$ abbreviates $\N{(\G A)}$). Therefore by the deduction theorem of $\IL$ we have (1)~$\IL \vdash \N A \leftrightarrow \GN A$. Since $\G A \in \NF$ by a property of $\GText$, by the assumption we have (2)~$\IL \vdash \GN A \leftrightarrow \G A$. From (1) and (2) we get $\IL \vdash \N A \leftrightarrow \G A$, as we wanted.\qedhere
  \end{myDescription}
\end{proof}

Another property shared by the usual negative translations into $\IL$  is idempotence in $\IL$, that is $\NText \circ \NText = \NText$ in the sense of: $\IL \vdash \N{(\N A)} \leftrightarrow \N A$ for all formulas $A$. Idempotence in $\IL$ is sometimes proved using the properties relative to $\NF$. The proof roughly proceeds like this: if $\NText$ is a negative translation into $\IL$ that (1)~translates into $\NF$ in $\IL$ and (2)~acts as the identity on $\NF$ in $\IL$, then $\N A \in \NF$ by (1), so $\IL \vdash \N{(\N A)} \leftrightarrow \N A$ by (2). (This argument is not rigorous since from (1) we only get that $\N A$ is equivalent in $\IL$ to a formula in $\NF$, not that $\N A \in \NF$.) This relation of idempotence in $\IL$ with the properties relative to $\NF$ can make us suspect that idempotence in $\IL$ also characterises the usual negative translations into $\IL$. But this is not so because, as we will show now, all negative translations into $\IL$ are idempotent in $\IL$ (but not equivalent in $\IL$, as we already saw).

\begin{definition}
  Let $\NText$ be a negative translation into $\IL$. We say that $\NText$ is \emph{idempotent in $\IL$} ($\ML$) if and only if for all formulas $A$ we have $\IL \vdash \N{(\N A)} \leftrightarrow \N A$ (respectively, $\ML \vdash \N{(\N A)} \leftrightarrow \N A$).
\end{definition}

\begin{proposition}
  All negative translations into $\IL$ ($\ML$) are idempotent in $\IL$ (respectively, $\ML$).
\end{proposition}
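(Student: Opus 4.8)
The plan is to derive idempotence directly from the soundness theorem and the characterisation theorem, essentially replaying the argument used for the implication $(3 \Rightarrow 1)$ in the previous theorem. Fix a negative translation $\NText$ into $\IL$ and an arbitrary formula $A$; the goal is $\IL \vdash \N{(\N A)} \leftrightarrow \N A$.

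First I would apply the characterisation theorem of $\NText$ to the formula $A$, obtaining $\CL \vdash \N A \leftrightarrow A$. From this, by pure propositional reasoning in $\CL$, I get the two one-premise derivations $\CL + A \vdash \N A$ and $\CL + \N A \vdash A$. No generalisation over free variables is needed here, since these follow tautologically from the closed implications $\N A \to A$ and $A \to \N A$, so the usual caveat about open hypotheses in the deduction theorem does not arise.

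Next I would feed these two derivations into the soundness theorem into $\IL$ of $\NText$, taking $\Gamma = \{A\}$ in the first case (with $\N A$ playing the role of the conclusion) and $\Gamma = \{\N A\}$ in the second. This yields $\IL + \N A \vdash \N{(\N A)}$ and $\IL + \N{(\N A)} \vdash \N A$. Finally the deduction theorem of $\IL$ converts these into $\IL \vdash \N A \to \N{(\N A)}$ and $\IL \vdash \N{(\N A)} \to \N A$, hence $\IL \vdash \N{(\N A)} \leftrightarrow \N A$, which is idempotence in $\IL$. The $\ML$ case is obtained verbatim by replacing $\IL$ with $\ML$ throughout and invoking the soundness theorem into $\ML$ instead.

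There is essentially no hard step: the entire content is the observation that the characterisation theorem, read as a classical equivalence between $A$ and $\N A$, is transported by soundness into an intuitionistic equivalence between $\N A$ and $\N{(\N A)}$. The only point that demands a moment's care is the bookkeeping with the parameter $\Gamma$ of the soundness theorem and checking that free variables of $A$ cause no difficulty, which they do not.
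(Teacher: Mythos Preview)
Your proof is correct and follows essentially the same route as the paper's: apply the characterisation theorem to get $\CL + A \vdash \N A$ and $\CL + \N A \vdash A$, push these through the soundness theorem to obtain $\IL + \N A \vdash \N{(\N A)}$ and $\IL + \N{(\N A)} \vdash \N A$, and conclude via the deduction theorem. The extra remarks about $\Gamma$ and free variables are harmless elaborations of the same argument.
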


\begin{proof}
  We do the proof only for negative translations into $\IL$ since the case of negative translations into $\ML$ is analogous.

  Consider an arbitrary negative translation $\NText$ into $\IL$, an arbitrary formula $A$ and let us prove $\IL \vdash \N{(\N A)} \leftrightarrow \N A$. By the characterisation theorem of $\NText$ we have $\CL + \N A \vdash A$ and $\CL + A \vdash \N A$. So by the soundness theorem into $\IL$ of $\NText$ we get $\IL + \N{(\N A)} \vdash \N A$ and $\IL + \N A \vdash \N{(\N A)}$. Then by the deduction theorem of $\IL$ we have $\IL \vdash \N{(\N A)} \leftrightarrow \N A$, as we wanted.
\end{proof}

\section{Conclusion}

The main three points of this article are the following.
\begin{myDescription}{Characterisation\ }
  \item[Conjecture\ ] The fact that the usual negative translations into $\IL$ are equivalent in $\IL$ leads to the conjecture: if we rigorously define the notion of a negative translation into $\IL$, then we should be able to prove that all negative translations are equivalent in $\IL$.
  \item[Refutation\ ] We refuted the conjecture by presenting two counterexamples.
  \item[Characterisation\ ] We characterised the usual negative translations into $\IL$ as being the ones that translate into $\NF$ in $\IL$, or equivalently, that act as the identity on $\NF$ in $\IL$.
\end{myDescription}

\bibliography{References}{}

\begin{thebibliography}{10}

\bibitem{BergerEtAl2002}
Ulrich Berger, Wilfried Buchholz, and Helmut Schwichtenberg.
\newblock Refined program extraction from classical proofs.
\newblock {\em Annals of Pure and Applied Logic}, 114(1):3--25, 2002.

\bibitem{Coquand1997}
Thierry Coquand.
\newblock Computational content of classical logic.
\newblock In Andrew~M.\ Pitts and Peter Dybjer, editors, {\em Semantics and
  logics of computation}, pages 33--78. Cambridge University Press, Cambridge,
  United Kingdom, 1997.

\bibitem{Dragalin1980b}
Albert~G.\ Dragalin.
\newblock New forms of realizability and {M}arkov's rule.
\newblock {\em Soviet Mathematics Doklady}, 21:461--464, 1980.

\bibitem{Dragalin1980a}
Albert~G.\ Dragalin.
\newblock New forms of realizability and {M}arkov's rule ({R}ussian).
\newblock {\em Doklady Akademii Nauk SSSR}, 251:534--537, 1980.
\newblock English translation: New forms of realizability and Markov's
  rule\cite{Dragalin1980b}.

\bibitem{FerreiraOliva2011}
Gilda Ferreira and Paulo Oliva.
\newblock On various negative translations.
\newblock In {\em Electronic Proceedings in Theoretical Computer Science},
  number~47, pages 21--33, 2011.
\newblock Proceedings of the Third International Workshop on Classical Logic
  and Computation, Brno, Czech Republic, 21\nobreakdash--22 August 2010.

\bibitem{FlaggFriedman1986}
Robert~C.\ Flagg and Harvey Friedman.
\newblock Epistemic and intuitionistic formal systems.
\newblock {\em Annals of Pure and Applied Logic}, 32:53--60, 1986.

\bibitem{Friedman1978}
Harvey Friedman.
\newblock Classically and intuitionistically provably recursive functions.
\newblock In Gert~H.\ M{\"u}ller and Dana~S.\ Scott, editors, {\em Higher set
  theory}, Lecture notes in mathematics, pages 21--27, Berlin, 1978.
  Springer-Verlag.
\newblock Proceedings of Higher Set Theory, Oberwolfach, Germany,
  13\nobreakdash--23 April 1977.

\bibitem{Gentzen1933}
Gerhard Gentzen.
\newblock {\"U}ber das {V}erh{\"a}ltnis zwischen intuitionistischer und
  klassischer {A}rithmetik, 1933.
\newblock Galley proof from \emph{Mathematische Annalen}. Appeared in
  \emph{Archiv f{\"u}r mathematische Logik und
  Grundlagenforschung}\cite{Gentzen1974}. English translation: On the relation
  between intuitionistic and classical arithmetic\cite[pages
  53\nobreakdash--67]{Gentzen1969}.

\bibitem{Gentzen1969}
Gerhard Gentzen.
\newblock {\em The collected papers of {G}erhard {G}entzen}.
\newblock Studies in logic and the foundations of mathematics. North-Holland
  Publishing Company, Amsterdam, 1969.

\bibitem{Gentzen1974}
Gerhard Gentzen.
\newblock {\"U}ber das {V}erh{\"a}ltnis zwischen intuitionistischer und
  klassischer {A}rithmetik.
\newblock {\em Archiv f{\"u}r mathematische {L}ogik und {G}rundlagenforschung},
  16:119--132, 1974.

\bibitem{Goedel1933}
Kurt G{\"o}del.
\newblock Zur intuitionistischen {A}rithmetik und {Z}ahlentheorie.
\newblock {\em Ergebnisse eines mathematischen {K}olloquiums}, 4:34--38, 1933.
\newblock English translation: On intuitionistic arithmetic and number
  theory\cite[pages 286\nobreakdash--295]{Goedel1986}.

\bibitem{Goedel1986}
Kurt G{\"o}del.
\newblock {\em Collected works}, volume~1.
\newblock Oxford University Press, Oxford, 1986.

\bibitem{Ishihara2000}
Hajime Ishihara.
\newblock A note on the {G}{\"o}del-{G}entzen translation.
\newblock {\em Mathematical Logic Quarterly}, 46(1):135--137, 2000.

\bibitem{Kolmogorov1925}
Andrey~N.\ Kolmogorov.
\newblock On the principle of tertium non datur ({R}ussian).
\newblock {\em Matematicheskii Sbornik}, 32(4):646--667, 1925.
\newblock English translation: On the principle of the excluded
  middle\cite[pages 414\nobreakdash--437]{vanHeijenoort1967}.

\bibitem{Krivine1990}
Jean-Louis Krivine.
\newblock Op\'erateurs de mise en m\'emoire et traduction de {G}{\"o}del.
\newblock {\em Archive for Mathematical Logic}, 30(4):241--267, 1990.

\bibitem{Kuroda1951}
Sigekatu Kuroda.
\newblock Intuitionistische {U}ntersuchungen der formalistischen {L}ogik.
\newblock {\em Nagoya Mathematical Journal}, 2:35--47, 1951.

\bibitem{StreicherReus1998}
Thomas Streicher and Bernhard Reus.
\newblock Classical logic, continuation semantics and abstract machines.
\newblock {\em Journal of functional programming}, 8(6):543--572, 1998.

\bibitem{vanHeijenoort1967}
Jean van Heijenoort, editor.
\newblock {\em From {F}rege to {G}{\"o}del: a source book in mathematical
  logic, 1879\nobreakdash--1931}.
\newblock Harvard University Press, Cambridge, Massachusetts, 1967.

\end{thebibliography}
\bibliographystyle{plain}

\end{document}